\begin{document}

\theoremstyle{plain}
\newtheorem{theorem}{Theorem}
\newtheorem{corollary}[theorem]{Corollary}
\newtheorem{lemma}[theorem]{Lemma}
\newtheorem{proposition}[theorem]{Proposition}

\theoremstyle{definition}
\newtheorem{definition}[theorem]{Definition}
\newtheorem{example}[theorem]{Example}
\newtheorem{conjecture}[theorem]{Conjecture}

\theoremstyle{remark}
\newtheorem{remark}[theorem]{Remark}

\newtheorem {Congruence}{Congruence}

\title{Chromatic Properties of the Euclidean Plane}
\author{James Currie\\
Department of Mathematics and Statistics\\
University of Winnipeg\\
Winnipeg, Manitoba\\
Canada R3B 2E9\\
\texttt{j.currie@uwinnipeg.ca}\vspace{.1in}\\
Roger B. Eggleton\\
School of Mathematical and Physical Sciences,\\
University of Newcastle,\\
Callaghan, NSW 2308, Australia\\
\texttt{
roger.eggleton@newcastle.edu.au}}
\maketitle
\begin{abstract}Let $G$ be the unit distance graph in the plane. A well-known problem in combinatorial geometry is that of determining the chromatic number of $G$. It is known that $4\le \chi(G)\le 7$. The upper bound of 7 is obtained using tilings of the plane. The present paper studies two problems where we seek proper colourings of $G$, adding restrictions inspired by tilings:
\begin{enumerate}
\item Let $H(\epsilon)$ be the graph whose vertices are the points of ${\mathbb R}^2$, with an edge between two points if their distance lies in the interval $[1,1+\epsilon]$. We show that for small $\epsilon$, $0<\epsilon\le \frac{3\sqrt{2}}{4}-1$, we have $6\le \chi(H(\epsilon))\le 7$. Exoo showed that $5\le \chi(H(\epsilon))$ for small $\epsilon$. This was strengthened by
Grytczuk et al., who removed the restriction to small $\epsilon$.
\item Suppose that $G$ is properly coloured, but so that two solidly coloured regions meet along a straight line in some neighbourhood. Then at least 5 colours must be used. 
\end{enumerate}

\end{abstract}

The real plane is to be coloured so that any two points at unit distance receive different colours. How many colours are required?

This problem from \cite{debruijn} may be considered as a graph colouring problem. Let $G$ be the graph whose vertices are points of ${\mathbb R}^2$, with an edge between any pair of points at unit distance. We seek the chromatic number of $G$. As noted in \cite{moser}, because $G$ has the graph of Figure~\ref{kites} as a subgraph, at least 4 colours are required. A proper colouring of $G$ by 7 colours can be had as indicated in Figure~\ref{tiling}. Thus $4\le \chi(G)\le 7$. The problem of determining the exact chromatic number of $G$ is still open, and has become well known.
\begin{figure}
\begin{center}
\setlength{\unitlength}{.1in}
\begin{picture}(25,20)
\put(0,0){\line(1,0){35}}
\put(0,5){\line(1,0){35}}
\put(0,10){\line(1,0){35}}
\put(0,15){\line(1,0){35}}
\put(0,20){\line(1,0){35}}

\put(0,0){\line(0,1){5}}
\put(5,0){\line(0,1){5}}
\put(10,0){\line(0,1){5}}
\put(15,0){\line(0,1){5}}
\put(20,0){\line(0,1){5}}
\put(25,0){\line(0,1){5}}
\put(30,0){\line(0,1){5}}
\put(35,0){\line(0,1){5}}

\put(0,0){\line(0,1){5}}
\put(5,0){\line(0,1){5}}
\put(10,0){\line(0,1){5}}
\put(15,0){\line(0,1){5}}
\put(20,0){\line(0,1){5}}
\put(25,0){\line(0,1){5}}
\put(30,0){\line(0,1){5}}
\put(35,0){\line(0,1){5}}

\put(2.5,5){\line(0,1){5}}
\put(7.5,5){\line(0,1){5}}
\put(12.5,5){\line(0,1){5}}
\put(17.5,5){\line(0,1){5}}
\put(22.5,5){\line(0,1){5}}
\put(27.5,5){\line(0,1){5}}
\put(32.5,5){\line(0,1){5}}

\put(0,10){\line(0,1){5}}
\put(5,10){\line(0,1){5}}
\put(10,10){\line(0,1){5}}
\put(15,10){\line(0,1){5}}
\put(20,10){\line(0,1){5}}
\put(25,10){\line(0,1){5}}
\put(30,10){\line(0,1){5}}
\put(35,10){\line(0,1){5}}

\put(2.5,15){\line(0,1){5}}
\put(7.5,15){\line(0,1){5}}
\put(12.5,15){\line(0,1){5}}
\put(17.5,15){\line(0,1){5}}
\put(22.5,15){\line(0,1){5}}
\put(27.5,15){\line(0,1){5}}
\put(32.5,15){\line(0,1){5}}

\put(2,2){1}
\put(7,2){2}
\put(12,2){3}
\put(16.3,1.3){\rotatebox{45}{\tiny 1 unit}}
\put(16.5,1.5){\vector(-1,-1){1.5}}
\put(18.5,3.5){\vector(1,1){1.5}}
\put(22,2){5}
\put(27,2){6}
\put(32,2){7}

\put(4.5,7){4}
\put(9.5,7){5}
\put(14.5,7){6}
\put(19.5,7){7}
\put(24.5,7){1}
\put(29.5,7){2}

\put(2,12){6}
\put(7,12){7}
\put(12,12){1}
\put(17,12){2}
\put(22,12){3}
\put(27,12){4}
\put(32,12){5}

\put(4.5,17){2}
\put(9.5,17){3}
\put(14.5,17){4}
\put(19.5,17){5}
\put(24.5,17){6}
\put(29.5,17){7}

\put(20,0){$\underbrace{\hspace*{.75in}}$}
\put(20.5,-2.5){$\frac{3\sqrt{2}}{4}$  {\scriptsize units}}

\end{picture}
\end{center}
\caption{\protect\footnotesize  
A proper colouring of $G$, the unit distance graph in the plane}
\label{tiling}
\end{figure}
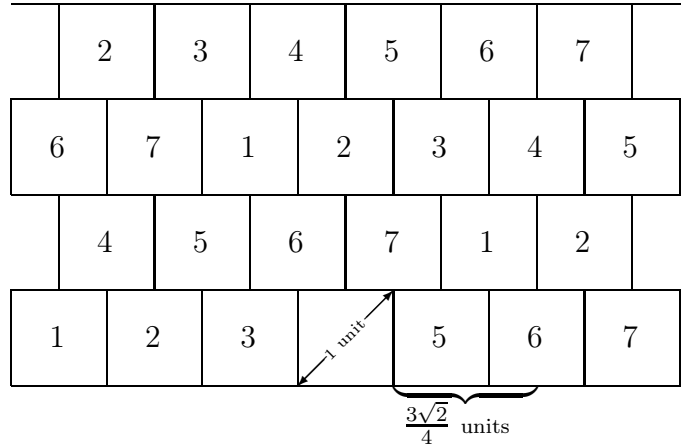

\begin{figure}
\begin{center}
\begin{pspicture}[shift=*](4,5)

\psset{unit=.3cm}
\qdisk(0,0){3pt}
\qdisk(10,0){3pt}
\qdisk(15,8.6){3pt}
\qdisk(5,8.6){3pt}

\qdisk(8.3,5.5){3pt}
\qdisk(-.6,10){3pt}
\qdisk(7.7,15.5){3pt}

\qline(0,0)(5,8.6)
\qline(5,8.6)(10,0)
\qline(0,0)(10,0)
\qline(5,8.6)(15,8.6)
\qline(10,0)(15,8.6)
\qline(0,0)(5,8.6)
\qline(0,0)(5,8.6)
\qline(0,0)(5,8.6)
\qline(0,0)(5,8.6)
\qline(0,0)(5,8.6)

\qline(0,0)(8.3,5.5)
\qline(0,0)(-.6,10)
\qline(8.5,5.5)(-.6,10)
\qline(7.7,15.5)(8.3,5.5)
\qline(7.7,15.5)(-.6,10)

\qline(7.7,15.5)(15,8.6)
\end{pspicture}
\end{center}
\caption{\protect\footnotesize  
A subgraph of $G$ requiring 4 colours}
\label{kites}
\end{figure}
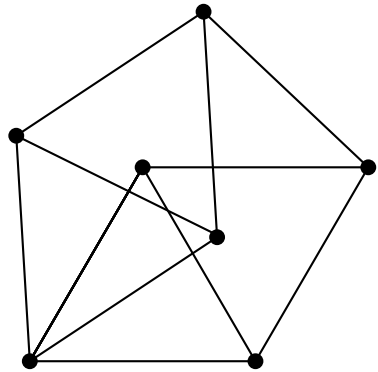

One method of attacking this problem \cite{benda,chilakamarri,eggleton,fischer1,fischer2,johnson2,woodall,zaks} has been to replace the plane ${\mathbb R}^2$ by the space $K^n$, where $K$ is some subfield of the reals, and $n\in{\mathbb N}$. For example, let $H$ be the induced subgraph of $G$ whose vertices have coordinates lying in ${\mathbb Q}(\sqrt{3})$. (Thus $K={\mathbb Q}(\sqrt{3})$, $n=2$.) Exactly 3 colours are required to properly colour $H$. Although $G$ is connected, $H$ is not, one may study the connectivity of `distance graphs' such as $H$.

We note that in Figure~\ref{tiling}, the plane is coloured in solid regions, rather than with `clouds' of colour. Whether such solid colourings will produce a colouring of $G$ with the minimum number of colours is not clear; perhaps a minimal colouring of $G$ has each colour everywhere dense in the plane. The results of this paper may be thought of as relating to plane colouring with solid regions. We produce lower bounds for the following colouring problems:

\begin{enumerate}
\item Let a small positive $\epsilon$ be given. Colour the plane so that any two points whose distance lies between 1 and $1+\epsilon$ are coloured differently. How many colours are required? 
\item Suppose that $G$ is properly coloured in such a way that two solidly coloured regions share a straight-line boundary. How many colours are required? 
\end{enumerate}

We provide lower bounds of 6 and 5 for problems 1 and 2 respectively. The first of these improves the bound $5\le H(\epsilon)$ found by Exoo and Grytczuk et al. \cite{exoo,grytczuk}.

In the colouring of Figure 2, the left-hand side and bottom of a square tile belong to the tile, but not the right-hand side or top; thus the monochromatic squares are translates of $[0,1)^2$. There is some `play' in this tiling; each row is shifted $3\sqrt{2}/4$ units to the left relative to the row below it. Note that $1<3\sqrt{2}/4\doteq 1.06$; thus no two points at distances in the interval $[1,1+\epsilon]$ receive the same colour, where $\epsilon=3\sqrt{2}/4-1$. Thus the colouring of Figure 2 is proper for both problems we consider (for small $\epsilon$), and an upper bound in both cases is 7.
\section{The plane distance graph connecting points at distances in $[1,1+\epsilon]$}
\begin{theorem}\label{epsilon} Fix $\epsilon$, $0<\epsilon<\frac{3\sqrt{2}}{4}-1$. Let $H$ be the graph whose vertices are the points of ${\mathbb R}^2$, with an edge between every pair of points whose distance lies in the interval $[1,1+\epsilon]$. Then $6\le \chi(G)\le 7$.
\end{theorem}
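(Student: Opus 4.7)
The upper bound $\chi(H)\le 7$ follows from the colouring of Figure~\ref{tiling}, as already noted: each monochromatic tile is a translate of $[0,1)^2$, its intrinsic diameter is strictly below $1$, and the row-to-row shift of $3\sqrt{2}/4$ guarantees that same-coloured tiles are separated by more than $1+\epsilon$ whenever $\epsilon<3\sqrt{2}/4-1$. The substance of the theorem is the lower bound $\chi(H)\ge 6$.

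My plan for the lower bound is to exhibit an explicit finite subgraph $F\subseteq H$ with $\chi(F)\ge 6$; by the compactness theorem of de Bruijn and Erd\H{o}s this implies $\chi(H)\ge 6$. As a starting point I would take a finite 5-chromatic subgraph of the kind used by Exoo or by Grytczuk et al.\ to prove $\chi(H(\epsilon))\ge 5$, and attempt to weld together two such pieces along a small shared set of vertices, arranging the geometry so that the colour constraints imposed on the shared set by the two copies are jointly unrealisable with only five colours. The geometric ingredient that distinguishes this problem from the classical unit-distance graph is the slack $\epsilon>0$: a configuration requiring edges of length $1\pm\delta$ for various small $\delta$ is realisable in $H$ provided every $\delta$ remains in $[0,\epsilon]$, which lets several otherwise-rigid gadgets be combined through shared vertices. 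I would expect the precise threshold $\epsilon<3\sqrt{2}/4-1$ to enter through the geometry of a $3/4\times 3/4$ square, whose diagonal is $3\sqrt{2}/4$; a plausible family of configurations is therefore obtained by placing points at corners, midpoints, or along diagonals of such squares, rotated and translated so that the relevant pairwise distances all fall in $[1,1+\epsilon]$.

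The main obstacle I foresee is the chromatic verification of $F$. Brute-force case analysis over colourings grows rapidly with the number of vertices, so I would look for a modular argument: exploit the Moser spindle of Figure~\ref{kites} (which forces four colours on any proper colouring of its seven vertices), overlap several copies to force a fifth colour on a specific vertex, and then use the $\epsilon$-slack to introduce a final point adjacent, via $\epsilon$-edges, to representatives of all five colour classes simultaneously, so that it cannot be properly coloured with any of them and a sixth colour must be introduced. One should aim to keep each pairwise distance of $F$ in the open interval $(1,1+\epsilon)$ whenever possible, so that the configuration is robust under small perturbations; such robustness is typically what yields the stated upper restriction on $\epsilon$.
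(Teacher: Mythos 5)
Your proposal is a research plan rather than a proof: the entire content of the lower bound --- the explicit finite subgraph $F$ with $\chi(F)\ge 6$ and the verification of its chromatic number --- is deferred to future work (``I would take\ldots'', ``I would expect\ldots'', ``I would look for\ldots''). Nothing in the write-up establishes that such an $F$ exists, that the proposed welding of two 5-chromatic gadgets is geometrically realisable with all edge lengths in $[1,1+\epsilon]$, or that the resulting graph actually needs six colours. The final step you sketch --- a point adjacent via $\epsilon$-edges to representatives of all five colour classes ``simultaneously'' --- is precisely the hard part, and you give no mechanism for forcing five distinct colours to appear within an $\epsilon$-neighbourhood of a single point. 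A secondary misreading: the threshold $\epsilon<\frac{3\sqrt{2}}{4}-1$ has nothing to do with the lower-bound configuration; it is exactly the condition under which the 7-colour tiling of Figure~\ref{tiling} remains proper for $H(\epsilon)$ (it is the row shift of the tiling), so you should not expect $3/4\times 3/4$ squares to appear in any 6-chromatic gadget.

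For contrast, the paper's argument is not a finite-subgraph/compactness argument at all. It takes an arbitrary proper colouring $c$ of $H$ and argues globally: an odd wheel $W(P)$ with spokes of length $1+\epsilon/2$ and rim edges of length $1+\delta$ forces three colours on its rim, and every rim vertex is adjacent in $H$ to \emph{every} point of the ball $B(P,\epsilon/2)$; so it suffices to locate a point $P$ whose $\epsilon/2$-ball already receives three colours. That location step is the real work: one overlays a triangular lattice $L(\epsilon/3)$, shows (Lemma~\ref{bounded}) that monochromatic components of the lattice have diameter less than $1$, and then uses minimal separating cycles (Lemma~\ref{thin}) to find two adjacent lattice points of different colours both adjacent to a third point of yet another colour. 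The choice of $P$ thus depends on the colouring, which is why no single explicit 6-chromatic finite subgraph is produced. If you want to salvage your approach you must either carry out the gadget construction and its chromatic verification in full, or adopt some device, like the lattice/separating-cycle argument, that forces three colours into an arbitrarily small ball under any proper colouring.
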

\begin{proof} Because of our observations regading the tiling of Figure~\ref{tiling}, it suffices to prove the lower bound. Let $c:{\mathbb R}^2\rightarrow \{1,2,3,\ldots, m\}$ be a proper colouring of $H$. Choose $\delta$, $\epsilon/2<\delta <\epsilon$, such that a rotation by $\theta=2\arcsin\frac{1+\delta}{2+\epsilon}$ has odd order $k$. 

Consider two points $P$ and $Q$ of ${\mathbb R}^2$ at distance $1+\epsilon/2$, and let $\phi$ be the rotation of ${\mathbb R}^2$ by $\theta$ about $P$.

Let us consider the induced subgraph $W(P)$ of $H$ with vertices $P, \phi(Q), \phi^2(Q),\ldots, \phi^{k-1}(Q)$ about $P$. Then $W(P)$ is a wheel centered at $P$ with `spokes' of length $1+\epsilon/2$, and a distance $1+\delta$ between adjacent points on the `rim'. The subgraph $R(P)$ of $H$ induced by the `rim' vertices $Q$, $\phi(Q)$, $\phi^2(Q),\ldots,\phi^{k-1}(Q)$ is an odd cycle, and $\chi(R(P))=3$. (See Figure~\ref{wheel}.) Colouring $c$ must assign at least 3 colours to $R(P)$. Further, each vertex of $R(P)$ is adjacent to every point of the closed ball $B(P,\epsilon/2)=\{X\in{\mathbb R}^2:|X-P|\le\epsilon/2\}$. Thus if $r$ is the number of colours assigned by $c$ to $B(P,\epsilon/2)$, then $c$ uses at least $3 + r$ colours to colour $H$.

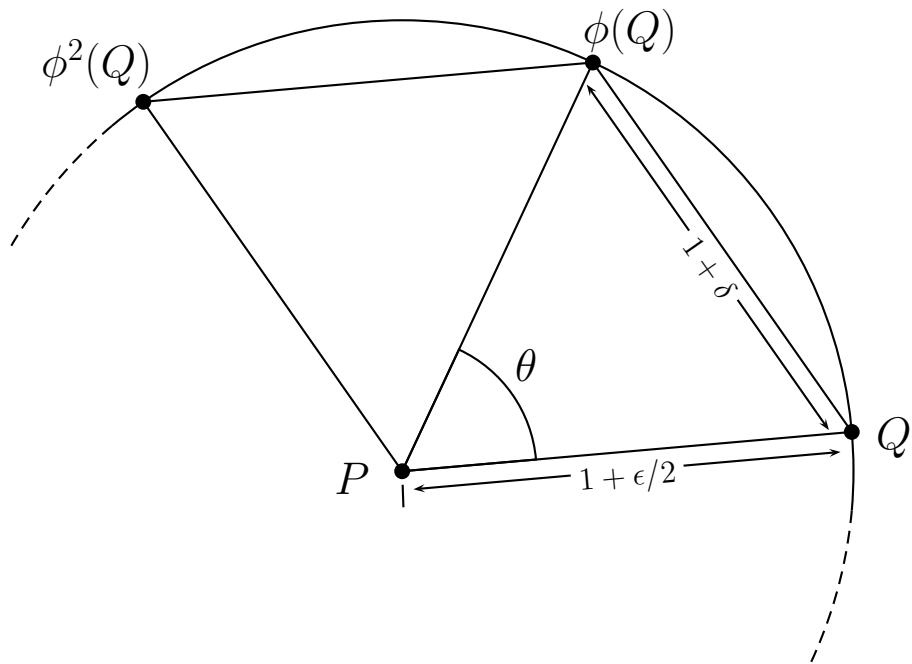
\begin{figure}
\begin{center}
\begin{pspicture}[shift=*](15,15)
\psset{unit=.6cm}
\SpecialCoor
\psarc[origin={10,5}](0,0){10}{-5}{130}
\psarc[origin={10,5},linestyle=dashed](0,0){10}{-25}{-5}
\psarc[origin={10,5},linestyle=dashed](0,0){10}{130}{150}
\pscircle*[origin={10,5}](10;5){3pt}
\pscircle*[origin={10,5}](10;65){3pt}
\pscircle*[origin={10,5}](10;125){3pt}
\pscircle*[origin={10,5}](0,0){3pt}
\psline[origin={10,5}]{<->}(9.5;5.5)(9.5;64.5)

\put(8.5,4.5){\Large $P$}
\put(20.5,5.5){\Large $Q$}
\put(14,14.5){\Large $\phi(Q)$}
\put(2,13.75){\Large $\phi^2(Q)$}
\put(12.5,7){\Large $\theta$}
\pswedge[origin={10,5}](0,0){3}{5}{65}

\rput{305}(16.8,9.5){\psframebox*{$1+\delta$}}

\psline[origin={10,5}]{}(0;0)(10;5)
\psline[origin={10,5}]{}(0;0)(10;65)
\psline[origin={10,5}]{}(0;0)(10;125)
\psline[origin={10,5}]{}(10;65)(10;5)
\psline[origin={10,5}]{}(10;65)(10;125)
\psline[origin={10,5}]{}(0.3;272)(.8;272)
\psline[origin={10.25,4.6}]{<->}(0;0)(9.5;5)

\rput{5}(15,4.85){\psframebox*{$1+\epsilon/2$}}
\end{pspicture}
\end{center}
\caption{\protect\footnotesize  
The induced subgraph $W(P)$ of $H$}
\label{wheel}
\end{figure}

So far, we have not worried how to pick $P$. We shall now show that $P$ can be chosen so that $r\ge 3$. This will complete our proof.

Suppose $\gamma$ is fixed, $0<\gamma<\epsilon$. Consider the plane lattice consisting of all points with coordinates of the form $i\gamma(1,0)+j\gamma(1/2,\sqrt{3}/2), i,j\in {\mathbb Z}$. We get the {\bf triangular lattice graph} $L(\gamma)$ by connecting every pair of points in this lattice at distance $\gamma$.

Colouring $c$ induces a natural partition of $L(\gamma)$ into induced subgraphs $L_1, L_2,\ldots, L_m$ where $V(L_i)=\{u\in V(L):c(u)=i\}$. If $u\in L(\gamma)$ and $c(u)=i$, then by the monochromatic component of $L(\gamma)$ containing $u$, we mean the connected component of $L_i$ containing $u$. 
\begin{lemma}\label{bounded}
Let $u\in L$; let $M$ be the monochromatic component of $L(\gamma)$ containing $u$. Then if $w\in M$, the distance $\rho(u,w)$ between $u$ and $w$ is less than $1$.
\end{lemma}
\noindent{\em Proof of Lemma~\ref{bounded}:}
Our proof works by induction on the (graph) length of a $uw$ path in $M$.

\noindent {\bf Length 0:} If there is a path from $u$ to $w$ of length 0, then $u=w$, and $\rho(u,w)=0<1$. 

\noindent {\bf Length $m$:} Assume that if there is a path from $u$ to $w$ of length m, tehn $\rho(u,w)<1$. 

\noindent {\bf Length $m+1$:} Suppose that a $uw$ path in $M$ is given by $u=u_1,u_2,\ldots, u_m, u_{m+1}=w$. By our induction hypothesis, $\rho(u,u_m)<1$. But now $\rho(u,w)\le\rho(u,u_m)+\rho(u_m,u_{m+1}<1+\gamma<1+\epsilon$. Since $c(u)=c(w)$, we cannot have $1\le \rho(u,w)\le1+\epsilon$. Thus $\rho(u,w)<1$, as claimed. 

It follows that diam $M \le 1$. ({\em End of proof of Lemma~\ref{bounded}})

If $C$ is any cycle in $L(\gamma)$, let {\bf int} $C$ denote the set of vertices of $L(\gamma)$ which are in the interior of $C$, and let $V(C)$ denote the set of all vertices of $C$ itself. Given any finite subgraph $M$ of $L(\gamma)$, we define a {\bf separating cycle} for $M$ to be any cycle of $L(\gamma)$ which contains $M$ in its interior, that is, any cycle in $L(\gamma)$ such that all vertices of $M$ belong to int $C$. By considering very large cycles in $L(\gamma)$, it is clear that $M$ does have a separating cycle.

Among all separating cycles for $M$, a {\bf thin} separating cycle is one which minimizes the number of vertices in int $C$. Among all thin cycles for $M$, a {\bf minimal} separating cycle is one which minimizes the number of vertices in $V(C)$.

\begin{lemma}\label{thin}For any real $\gamma>0$, let $M$ be any non-empty finite connected subgraph of the triangular lattice graph $L(\gamma)$. Then $M$ has a unique minimal separating cycle $C$, and if two vertices are adjacent in $C$, there is some vertex of $M$ which is adjacent to both of them.
\end{lemma}

\noindent{\em Proof of Lemma~\ref{thin}:}

Consider a separating cycle $C$ for $M$. Suppose that two vertices $v$ and $w$ are adjacent in $C$, but that no vertex of $M$ is adjacent to both of them. Just two vertices of $L(\gamma)$, say $x$ and $y$, are adjacent to both $v$ and $w$. Note that $x$, $w$ and $y$ are adjacent vertices on $B$, the 6-cycle bounding the closed (graph) neighbourhood of $v$. (See Figure~\ref{hex}.) If neither $x$ nor $y$ belongs to $C$, the other vertex of $C$ which is adjacent to $v$, say $u$, must be on the major arc of $B$ between $x$ and $y$. Thus the paths $uvw$ and $xvy$ cross at $v$, so $xvy$ crosses $c$ in only one point, namely $v$. The Jordan Curve Theorem therefore implies that one of $x$ and $y$ must be interior to $C$, the other exterior.

\begin{figure}
\begin{center}
\begin{pspicture}[shift=*](15,15)
\psset{unit=.6cm}
\SpecialCoor
\psline[origin={10,10},linestyle=dashed]{}(0;0)(5;0)
\psline[origin={10,10},linestyle=dashed]{}(0;0)(5;120)
\psline[origin={15,10},linestyle=dashed]{}(0;0)(5;120)
\psline[origin={5,10}]{}(0;0)(5;0)
\psline[origin={10,10}]{}(0;0)(5;60)
\psline[origin={8.75,12.2}]{}(0;0)(5;0)
\put(4.8,9.3){$u$}
\put(9.8,9.3){$v$}
\put(9.8,12.4){$S$}
\put(14.9,9.3){$y$}
\put(7.3,14.7){$x$}
\put(12.3,14.7){$w$}
\pscircle*[linecolor=white,origin={10,10}](0,0){3pt}
\pscircle[origin={10,10}](0,0){3pt}

\pscircle*[linecolor=white,origin={10,10}](5;0){3pt}
\pscircle[origin={10,10}](5;0){3pt}

\pscircle*[linecolor=white,origin={10,10}](5;180){3pt}
\pscircle[origin={10,10}](5;180){3pt}

\pscircle*[linecolor=white,origin={10,10}](5;120){3pt}
\pscircle[origin={10,10}](5;120){3pt}

\pscircle*[linecolor=white,origin={10,10}](5;240){3pt}
\pscircle[origin={10,10}](5;240){3pt}

\pscircle*[linecolor=white,origin={10,10}](5;300){3pt}
\pscircle[origin={10,10}](5;300){3pt}

\pscircle*[origin={10,10}](2.5;120){3pt}

\pscircle*[origin={15,10}](2.5;120){3pt}

\pscircle*[linecolor=white,origin={15,10}](5;120){3pt}
\pscircle[origin={15,10}](5;120){3pt}

\end{pspicture}
\end{center}
\caption{\protect\footnotesize  
The hexset centred at $v$}
\label{hex}
\end{figure}
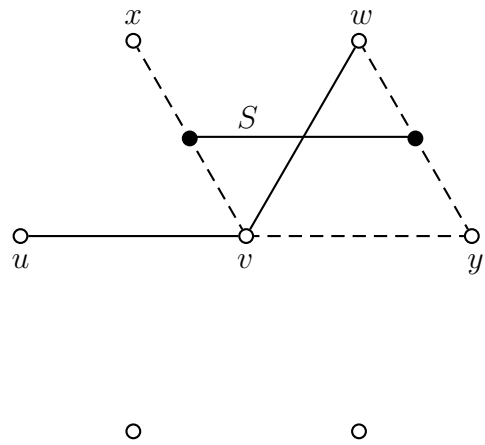

There are two possibilities: either exactly one of $x$ and $y$ belongs to int $C$, or at least one of $x$ and $y$ belongs to $V(C)$ and the other does not belong to int $C$.
\begin{enumerate}
\item Suppose one of $x$ and $y$, say $y$, is in the interior of $C$; the choice of $v$ and $w$ ensures that $y$ is not in $M$, so that we can form a separating cycle $C^{\prime\prime}$ from $C$ by replacing the path $vw$ by the path $vyw$. This construction ensures that int $C^{\prime\prime}$ is a proper subset of int $C$, since $y$ is in int $C^{\prime\prime}$, but not in int $C$.
\item Suppose that at least one of $x$ and $y$ belongs to $V(C)$ and the other does not belong to int $C$. 
Then $x$ cannot be adjacent in $C$ to both $v$ and $w$, for this would force $C$ to be a 3-cycle. Without loss of generality, we can assume that the labeling is chosen so that $x$ is not adjacent to $v$ in $C$. Extending this argument, we also assume that $y$ is not adjacent to $w$ in $C$. Let $S$ be a straight line segment with endpoints which are the midpoints of the edges $vx$ and $wy$. This crosses $C$ in precisely one point, namely, the midpoint of the edge $vw$. The Jordan Curve Theorem therefore implies that one of the endpoints of $S$ is in the interior of $C$, and the other is in the exterior. Hence, exactly one of the edges $vx$ and $wy$ is in the interior of $C$; the other is in the exterior. Without loss of generality, we may suppose that $vx$ is in the interior of $C$. Note that $x$ is not in the interior of $C$, so $x$ must be in $V(C)$.Cycle $C$ contains exactly two paths with endpoints $v$ and $x$; together which $vx$, each forms a cycle, and the interiors of these cycles are contained in the interior of $C$. But no edge of $L(\gamma)$ crosses $vx$, and $M$ is a connected subgraph of $L(\gamma)$, lying in the interior of $C$. It follows that one of the new cycles, say $C'$, is a separating cycle for $M$. Once again, the construction ensures that int $C'$ is a subset of int $C$, though not necessarily a proper one. However, at least two edges of $C$ do not belong to $C'$, so at least one vertex of $C$ does not belong to $C'$. Therefore $V(C')$ is a proper subset of $V(C)$.
\end{enumerate}
So long as two vertices of a separating cycle $C$ for $M$ are not both adjacent to some vertex of $M$, we can use one of the above constructions to obtain a separating cycle $C'$ for $M$ for which either int $C'$ is a proper subset of int $C$, or $V(C')$ is a proper subset of $V(C)$. Since int $C$ and $V(C)$ are initially finite, the process terminates in a finite number of steps. We finish with some separating cycle $C'$ which has the property that any two vertices adjacent in $C'$ are both adjacent to some vertex of $M$. 

To see that $C'$ is uniquely determined, suppose to the contrary that two such cycles could arise. There are two possibilities. If one such cycle lies entirely in the interior of the other, then no vertex of the outer one could be adjacent to any vertex of $M$. Otherwise, the two cycles must intersect, and then one contains a path that lies outside the other, and no vertex on that path could be adjacent to any vertex of $M$. These contradictions show that $C'$ is unique.
({\em End of proof of Lemma~\ref{thin}})

We are now ready to show the existence of a point $P$ such that $c$ gives $B(P,\epsilon/2)$ at least 3 colours:

Let $\gamma=\epsilon/3$. Let $v$ be any point of $L(\gamma)$. Denote the monochromatic component of $L(\gamma)$ containing $v$ by $M_1$. Having defined $M_i$, let $N_i$ be the unique thin separating cycle containing $M_i$. If $N_i$ is monochromatic, let $M_{i+1}$ be the monochromatic component of $L(\gamma)$ containing $N_i$. Note that $N_i$ is connected. Now let $j$ be least such that $N_j$ is notmonochromatic. Such a $j$ exists, since the diameter of a connected monochromatic subgraph of $L(\gamma)$ is bounded.

Let $x$ and $y$ be neighbours in $N_j$ such that $c(x)\ne c(y)$. By Lemma~\ref{thin}, there is a vertex $z$ of $M_j$ such that $z$ is adjacent to both $x$ and $y$. Moreover, since $M_j$ is a monochromatic component, $c(z)\ne c(z)$, $c(y)\ne c(z)$. If we pick $P$ to be the point of ${\mathbb R}^2$ occupied by $x$, then $B(P,\epsilon/2)$ contains $x, y$ and $z$, and thus receives at least three colours from $c$.

In conclusion, $6\le \chi(H)\le 7$.
\end{proof}

\section{Colourings with solidly coloured regions sharing a straight-line boundary}
Recall that $G$ is the graph whose vertices are points of ${\mathbb R}^2$, with an edge between any pair of points at unit distance.
\begin{theorem}
Let $c$ be a proper colouring of $G$ with the following property:

There is a point $P$ of ${\mathbb R}^2$, a half-plane $H$ , and a neighbourhood $N$ of $P$ so that $P$ lies on the line $H\cap H'$, $c$ colours each point of the interior of $H\cap N$ with some colour $a$, and each point of the interior of $H'\cap N$ with some colour $b$, $b\ne a$. (See Figure~\ref{isos}.) Then $c$ uses at least 5 colours.
\end{theorem}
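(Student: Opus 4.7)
\smallskip

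\noindent\textbf{Proof proposal.} My plan is to argue by contradiction: suppose $c$ uses only four colours, labelled $\{a,b,3,4\}$ with $a,b$ the colours of the two solid half-disks in $N$. Take coordinates so that $P=(0,0)$ and the boundary line $L=H\cap H'$ is the $x$-axis. The strategy is to identify a forced $\{3,4\}$-region, then a forced $\{a,b\}$-region on the boundary, and finally exploit both to obtain a contradiction.

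First, let $\mathcal{U}$ be the set of $X\in\mathbb{R}^2$ whose unit circle meets both $(H\cap N)^\circ$ and $(H'\cap N)^\circ$; any such $X$ is at unit distance from points of both colours $a$ and $b$, so $c(X)\in\{3,4\}$. A short computation shows that $\mathcal{U}$ contains a tubular neighbourhood of the unit circle centred at $P$, omitting only the two tangency points $(0,\pm 1)$; the same reasoning applied with $P$ replaced by any boundary point $P'\in L\cap N$ shows that the unit circle about $P'$ also lies in $\mathcal{U}$ up to two tangencies. Consequently the unit circle about each $P'\in L\cap N$ is $\{3,4\}$-coloured, and since its induced unit-distance graph is a union of $6$-cycles, both colours $3$ and $4$ must actually occur on it. Because $P'$ is adjacent to every point of this circle, $c(P')\notin\{3,4\}$, hence $c(P')\in\{a,b\}$.

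The cleanest way to finish would be to exhibit a unit equilateral triangle with all three vertices in $\mathcal{U}$, giving an immediate contradiction since three colours from $\{3,4\}$ would be needed. The principal technical obstacle is the case of small $N$: a unit equilateral triangle has circumradius $1/\sqrt{3}\ne 1$, so its three vertices cannot lie simultaneously in a thin tube around the unit circle about $P$. I plan to resolve this with a Moser-spindle-style mixed configuration: for small $\varepsilon>0$, take $Q_\pm=(0,\pm\varepsilon)\in N$ and $X_\pm=(\pm\sqrt{1-\varepsilon^2},0)\in\mathcal{U}$, then adjoin the third vertices $W$ of the four equilateral unit triangles built on the edges $Q_\pm X_\pm$. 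Several of these $W$ themselves lie in $\mathcal{U}$, and propagating the constraints $c(Q_\pm)\in\{a,b\}$ and $c(X_\pm),c(W)\in\{3,4\}$ across the unit-distance edges of this finite configuration should force some vertex to take a colour outside $\{a,b,3,4\}$. The main technical difficulty is the combinatorial bookkeeping which verifies that this conflict is actually unavoidable.
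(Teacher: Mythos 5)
Your first reduction is sound: under a $4$-colour hypothesis, any point whose unit circle crosses the boundary segment $L\cap N$ transversally is at unit distance from an $a$-point and a $b$-point, so it is forced into $\{3,4\}$; this is the same mechanism the paper uses (there phrased as: the apex of an isosceles triangle with unit legs and a tiny base straddling $L$ cannot be coloured $a$ or $b$). The observation that boundary points $P'$ must then be coloured from $\{a,b\}$ is correct but plays no role in your proposed finish. The gap is in the finish itself, and it is not merely bookkeeping: to get a contradiction from ``this set $\mathcal{U}$ is $\{3,4\}$-coloured'' you need an \emph{odd cycle} of the unit-distance graph lying inside $\mathcal{U}$, and your spindle configuration does not contain one. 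Check the distances: with $Q_\pm=(0,\pm\varepsilon)$ and $X_\pm=(\pm\sqrt{1-\varepsilon^2},0)$, the eight apexes $W$ sit near $(\pm\tfrac12,\pm\tfrac{\sqrt3}{2})$; the only unit distances among the points forced into $\{3,4\}$ are from $X_+$ to the four right-hand apexes and from $X_-$ to the four left-hand ones (apex--apex distances come out to $\sqrt3$, $2\varepsilon$, $\sqrt{1-\varepsilon^2}+\sqrt3\,\varepsilon$, etc., none equal to $1$). The induced graph on the $\{3,4\}$-points is therefore a forest, hence $2$-colourable, and no conflict is forced. The edges $Q_\pm W$ contribute nothing further, since $c(Q_\pm)\in\{a,b\}$ is already disjoint from $\{3,4\}$.

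The repair is exactly where the paper goes, and it is available inside your own set $\mathcal{U}$: for any radius $\delta$ with $\sqrt{1-\delta^2}$ smaller than the radius of $N$ and $\delta<1$, the \emph{entire} circle of radius $\delta$ about $P$ lies in $\mathcal{U}$ (each of its points has its unit circle crossing $L$ transversally at $(\pm\sqrt{1-\delta^2},0)$, up to the one bad position on the perpendicular to $L$ at $P$, which one avoids by rotating the starting point). Choosing $\delta$ so that the rotation by $\theta=2\arcsin\bigl(1/(2\delta)\bigr)$ has odd order $k$ makes the inscribed polygon with unit sides a closed odd cycle; an odd cycle needs $3$ colours, all distinct from $a$ and $b$, giving $5$ without any case analysis. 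Your instinct to avoid the equilateral triangle (circumradius $1/\sqrt3$, so it cannot sit in a thin annulus about radius $\approx 1$) was right; the correct substitute is a long odd cycle on a circle of radius just under $1$, not a spindle.
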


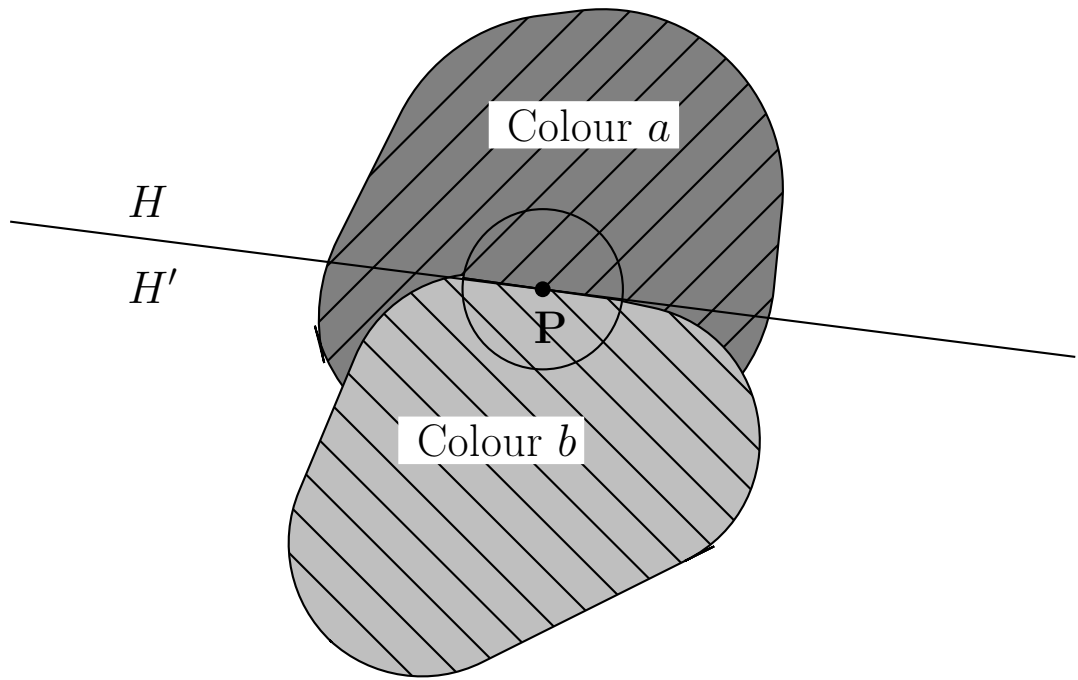
\begin{figure}
\begin{center}
\begin{pspicture}[shift=*](15,15)

\psset{unit=.6cm}
\pspolygon[linearc=4,fillstyle=hlines*,fillcolor=gray,hatchsep=12pt](10,10)(9,14)(12,20)(20,21)(19,11)
\pspolygon[linearc=3,fillstyle=vlines*,fillcolor=lightgray,hatchsep=12pt](11,5)(8,7.8)(11,15)(25,12)
\pswedge[fillstyle=hlines*,fillcolor=gray,hatchsep=12pt](14.2,14.2){1.8}{-8}{172}
\pswedge[fillstyle=vlines*,fillcolor=lightgray,hatchsep=12pt](14.2,14.2){1.8}{172}{-8}
\put(14,13){\bf \Large P}
\psline[]{}(2.4,15.7)(26,12.7)
\put(5,15.8){\bf \Large $H$}
\put(5,13.9){\bf \Large $H'$}
\qdisk(14.2,14.2){3pt}
\put(13,17.5){\psframebox*{ \Large Colour $a$}}
\put(11,10.5){\psframebox*{ \Large Colour $b$}}
\end{pspicture}
\end{center}
\caption{\protect\footnotesize  
Two solidly coloured regions sharing a straightline boundary in a neighbourhood}
\label{kites}
\end{figure}

\begin{proof}
Let $c$, $P$, $H$, $N$ be given as above, and let $\epsilon$ be chosen $0<\epsilon<1$, so that $B(P,\epsilon)$ lies inside $N$. Consider an isosceles triangle $T$ with base length $\epsilon$, equal sides length 1, such that the midpoint of the base of $T$ is $P$. Then the altitude of $T$ is $\sqrt{1-\epsilon^2/4}$. we note that the base of $T$ is contained in $B(P,\epsilon)$. In fact, suppose that $T(\delta)$ is an isosceles 
triangle with equal sides 1, altitude $\delta$, and the midpoint of its base at $P$. Then if $\sqrt{1-\epsilon^2/4}\le \delta<1$, the base of $T(\delta)$ will fall inside $B(P,\epsilon)$. (See Figure~\ref{isos}.)

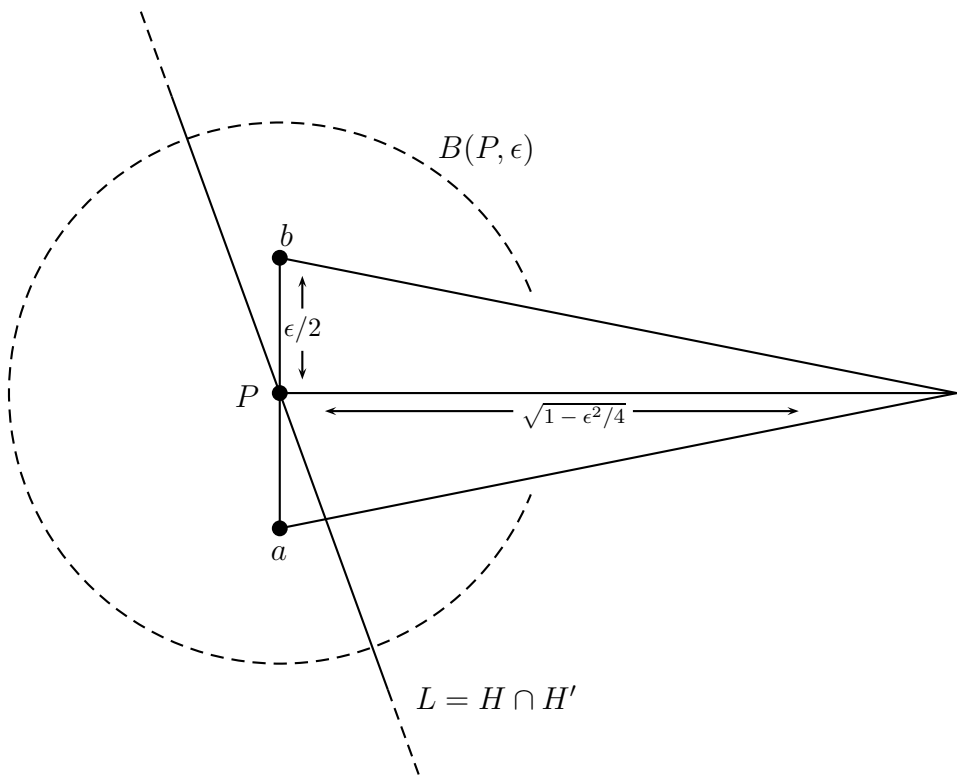
\begin{figure}
\begin{center}
\begin{pspicture}[shift=*](15,15)
\psset{unit=.6cm}
\SpecialCoor

\pscircle*[origin={7,10}](0,0){3pt}
\pscircle*[origin={7,10}](0,3){3pt}
\pscircle*[origin={7,10}](0,-3){3pt}

\psline[origin={7,10}]{}(0,3)(15,0)
\psline[origin={7,10}]{}(0,-3)(15,0)

\psline[origin={7,10}]{<->}(0.5,2.6)(0.5,0.3)
\rput(7.5,11.4){\psframebox*{\footnotesize$\epsilon/2$}}
\psline[origin={7,10}]{}(0,3)(0,-3)
\psline[origin={7,10}]{<->}(1,-.4)(11.5,-.4)
\rput(13.5,9.5){\psframebox*{\scriptsize $\sqrt{1-\epsilon^2/4}$}}
\psline[origin={7,10}]{}(0,0)(15,0)

\psline[origin={7,10}]{}(7;290)(7;110)
\psline[linestyle=dashed,origin={7,10}]{}(7;110)(9;110)
\psline[linestyle=dashed,origin={7,10}]{}(7;290)(9;290)
\put(10,3){$L=H\cap H'$}

\psarc[linestyle=dashed,origin={7,10}](0,0){6}{22}{338}

\put(10.5,15.2){$B(P,\epsilon)$}
\put(7,13.25){$b$}
\put(6,9.7){$P$}
\put(6.8,6.3){$a$}
\end{pspicture}
\end{center}
\caption{\protect\footnotesize  
The triangle $T$}
\label{isos}
\end{figure}

Suppose that $\delta$ is given, $\sqrt{1-\epsilon^2/4}\le\delta<1$. Let $L(\gamma)$ be the line $L=H\cap H'$. If the base of $T(\delta)$ does not coincide with $L(\gamma)$, then the base of $T(\delta)$ crosses $L(\gamma)$. In this case, the endpoints of the base of $T(\delta)$ will receive two different colours from $c$; one endpoint will be coloured $a$, the other $b$. The base of $T(\delta)$ coincides with $L(\gamma)$ only if the apex of $T(\delta)$ is on the perpendicular to $L(\gamma)$ at $P$.

Choose $\delta$, $\sqrt{1-\epsilon^2/4}< \delta <1$, such that a rotation by $\theta=2\arcsin(1/2\delta)$ has odd order $k$. Let $\phi$ be the rotation of ${\mathbb R}^2$ by $\theta$ about $P$. Choose a point $Q$ at distance $\delta$ from $P$ such that none of the $k$ points $Q$, $\phi(Q)$, $\phi^2(Q)\,\ldots,\phi^{k-1}(Q)$ lines on the perpendicular to $L(\gamma)$ through $P$.  Let us consider the induced subgraph $W(P)$ of $G$ with vertices $P$, $Q$, $\phi(Q)$, $\phi^2(Q)\,\ldots,\phi^{k-1}(Q)$. The graph $W(P)$ is a wheel centered at $P$ with`spokes' of length $\delta$, and a distance 1 between adjacent points on the `rim'. The subgraph $R(G)$ induced by the rim vertices $Q$, $\phi(Q)$, $\phi^2(Q)\,\ldots,\phi^{k-1}(Q)$ is an odd cycle, and $\chi(R(P))=3$. Choosing $T(\delta)$ to have $\phi^j(Q)$ as an apex, we see that $\phi^j(Q)$ is adjacent in $G$ to points coloured $a$ and points coloured $b$, for $j=0,1,,\ldots, k-1$.

It follows that $c$ uses at least five colours: $a$ and $b$, and at least three other colours for $R(P)$.
\end{proof}

\end{document}